\numberwithin{equation}{section}
\theoremstyle{plain}
\newtheorem{theorem}{Theorem}[section]
\newtheorem{proposition}[theorem]{Proposition}
\newtheorem{lemma}[theorem]{Lemma}
\theoremstyle{definition}
\newtheorem{definition}[theorem]{Definition}
\newtheorem{example}[theorem]{Example}
\definecolor{newblue}{rgb}{0.27, 0.32, 0.86}
\definecolor{newred}{rgb}{0.86, 0.32, 0.27}
\def\utr{\, \underline{\triangleright}\, }
\def\otr{\, \overline{\triangleright}\, }
\title[Biquandle cocycle condition for invariants of immersed surface-links]{Biquandle cocycle condition for invariants\\ of immersed surface-links in the four-space}
\author{Micha{\l} Jab{\l}onowski}
\address{Institute of Mathematics, Faculty of Mathematics, Physics and Informatics,\newline University of Gda\'nsk, 80-308 Gda\'nsk, Poland}
\keywords{biquandle cocycle invariant, immersed surface-links, Roseman moves}
\subjclass[2020]{57K45 (primary), secondary: 57Q35, 57R42, 57K12}
\email{michal.jablonowski@gmail.com}
\date{\today}
\begin{document}

\maketitle

\begin{abstract}
We consider a biquandle-cohomological framework for invariants of oriented immersed surface-links in the four-space. After reviewing projections and Roseman moves for immersed surfaces, we prove that the move types (a, b, c, e, f, g, h) form a minimal generating set, showing in particular that the singular move (h) is independent of the embedded-case set (a, b, c, e, f, g). We extend biquandle colorings to broken surface diagrams with singular points and establish that coloring sets are in bijection for diagrams related by these moves, yielding a coloring number invariant for immersed surface-links. We introduce singular biquandle 3-cocycles: biquandle 3-cocycles satisfying an additional antisymmetry when the singular relations hold. Using such cocycles, we define a triple-point state-sum with Boltzmann weights and prove its invariance under all generating moves, including (h), thereby obtaining a state-sum invariant for immersed surface-links. The theory is illustrated on the Fenn-Rolfsen link example, where a computation yields a non-trivial integer value, demonstrating the nontriviality of the invariant in the immersed setting. These results unify and extend biquandle cocycle invariants from embedded to immersed surface-links.
\end{abstract}

\renewcommand*{\arraystretch}{1.4}

\section{Introduction}\label{sec1}

We work in the smooth ($C^{\infty}$) category. All manifolds will be assumed to be compact. Let $X$ and $Y$ be smooth manifolds. Let $f:X^n\to Y^{m}$ be a smooth map. It is called an \emph{immersion} if at each point $x\in X$ the induced differential is a monomorphism. In the case $f:X^2\to \mathbb{S}^{4}$, we have generically the finite point self-transverse intersections. An immersion (or its image when no confusion arises) of a closed (i.e., compact, without boundary) surface $F$ into $\mathbb{S}^4=\mathbb{R}^4\cup\{\infty\}$ is called an \emph{immersed surface-link} (or \emph{immersed surface-knot} if it is connected). 
\par 
Two immersed surface-links are \emph{equivalent} if there exists an orientation-preserving homeomorphism of the four-space $\mathbb{R}^4$ to itself (or equivalently an auto-homeomorphism of the four-sphere $\mathbb{S}^4$), mapping one of those surfaces onto the other. Fix an immersed surface-link $F$ in a manifold $\mathbb{S}^4$. For an open tubular neighborhood, denoted $N(F)$, the \emph{exterior} of $F$ is $E(F):=\mathbb{S}^4\backslash N(F)$.
\par 
This paper is organized as follows. In Section \ref{sec2}, we review the moves on projections of immersed surfaces into $\mathbb{R}^3$ and prove that the generating set of the moves is a minimal generating set. In Section \ref{sec3}, we review the biquandle structure and its applications to the broken surface method of examining embedded surface-links, and we extend the theory to the singular case. In Section \ref{sec4}, we derive conditions for a biquandle $3$-cocycle to later define the state-sum invariant of immersed surface-links in $\mathbb{R}^3$. Finally, we give an example of an immersed surface-link and a singular $3$-cocycle, such that the computed singular cocycle invariant takes a nontrivial integer value.

\section{Moves on projections of immersed surfaces}\label{sec2}

Let us fix a generic projection $\pi:\mathbb{R}^4\to\mathbb{R}^3$ by \emph{$\pi((x, y, z, t))=(x, y, z)$}. A \emph{double decker set} $\Gamma$ of a surface $F$ is the closure in $F$ of a set $\{p\in F:\#(\pi^{-1}(\pi(p))\cap F)>1\}$, and the \emph{double point set} is the image $\pi(\Gamma)$. Without loss of generality, we may assume that the image of projection $\pi$ is in \emph{general position}, i.e., a neighborhood of a surface projection is locally homeomorphic to one of the cases shown in Figure \ref{Singular}.

\begin{figure}[h!t]
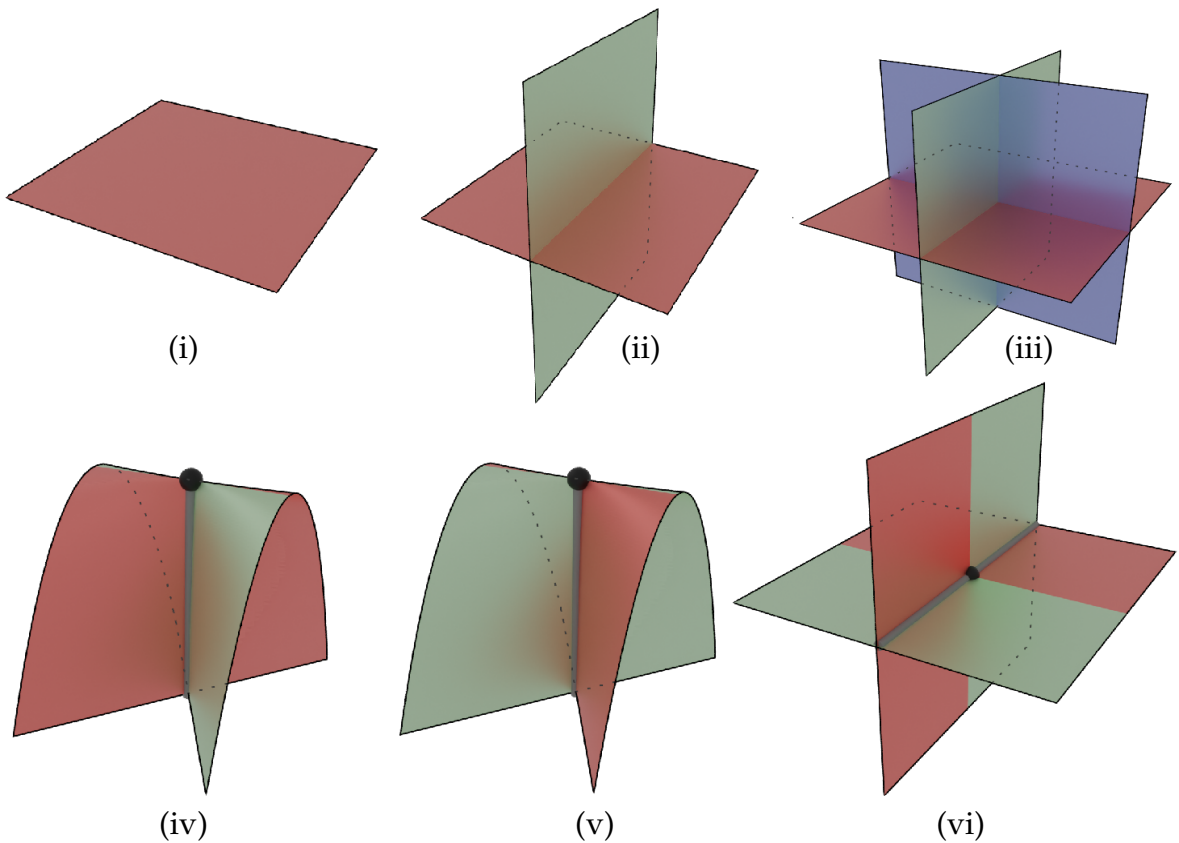

	\begin{center}
		\begin{lpic}[b(0.5cm)]{Singular.png(15.5cm)}
			\lbl[t]{80,210;(i)}
			\lbl[t]{280,210; (ii)}
			\lbl[t]{450,210; (iii)}
			\lbl[t]{80,0;(iv)}
			\lbl[t]{260,0; (v)}
			\lbl[t]{420,0; (vi)}
		\end{lpic}
		\caption{Possibilities of a neighborhood of a surface projection.\label{Singular}}
	\end{center}
\end{figure}

\par 
Points in the projection corresponding to cases (i)--(vi) are called a \emph{regular point} (i), a \emph{double point} (ii), a \emph{triple point} (iii), a \emph{negative branch point} (iv), a \emph{positive branch point} (v), and a \emph{singular point} (vi), respectively. In Figure \ref{Singular}, there are black dots that indicate the position of the branch or the singular points, and around the dots, there are gray segments that indicate where the double point set is. 

A \emph{diagram} $D_K$ of a knotted surface $K$ is the image $\pi(K)$ with additional information in the small neighborhood of the self-crossing, about which piece of surface was higher before being projected.
\par 
A common way to indicate over/under information on a diagram is to make the under-sheets be broken along the double point set curves, and such a broken surface is called a \emph{broken surface diagram} of a surface-link. Instead, we will use color information, such that the red part of the diagram lies farther, with respect to the projection direction, than the green, the green farther than the blue, and the blue farther than the yellow. For later use of algebraic coloring of the diagram, we will use the broken surface diagram terminology like \emph{semi-sheets} (without making actual cuts).

\begin{figure}[h!t]
	\begin{center}
		\begin{lpic}[]{RoseB.png(16.5cm)}
			\lbl[t]{110,370;$(a)\atop \longleftrightarrow$}
			\lbl[t]{360,370;$(b)\atop \longleftrightarrow$}
			\lbl[t]{115,300;$(c)\atop \longleftrightarrow$}
			\lbl[t]{365,300;$(e)\atop \longleftrightarrow$}
			\lbl[t]{110,210;$(f)\atop \longleftrightarrow$}
			\lbl[t]{360,210;$(g)\atop \longleftrightarrow$}
			\lbl[t]{220,95;$(h)\atop \longleftrightarrow$}
		\end{lpic}
		\caption{Moves.\label{RoseB}}
	\end{center}
\end{figure}

We have the Roseman moves (a, b, c, e, f, g) presented in \cite{Ros98} and the move (h) presented in \cite{AMW17}. We present them in Figure \ref{RoseB}. In \cite{Ros98}, Roseman introduced seven moves; the additional move (d), not shown, follows from the other six Roseman moves \cite{Yas05}.
\par 
For simplicity, we present only one valid combination of the height colors for each of those moves. The other valid moves are understood to be the mirror moves (changing the color order to some other admissible one); more details are in \cite{CKS04}.

\begin{theorem}[\cite{Ros98, AMW17}]
	Two diagrams represent equivalent immersed surface-links if and only if one of them may be achieved from the other by a finite sequence of moves (a, b, c, e, f, g, h) allowing an isotopy of the diagram in $\mathbb{R}^3$.
\end{theorem}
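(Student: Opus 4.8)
The plan is to prove the two implications separately, placing essentially all of the content in the ``only if'' direction. For sufficiency, I would observe that each of the local pictures (a)--(h) in Figure \ref{RoseB} is supported in a small ball of $\mathbb{R}^3$ and is realized by a local ambient isotopy of $\mathbb{R}^4$ that changes neither the immersion type nor the equivalence class of the surface-link, the accompanying height (color) data recording that this isotopy may be taken vertical with respect to $\pi$. Hence any finite sequence of such moves, together with isotopy of the diagram in $\mathbb{R}^3$, yields an equivalent immersed surface-link. Before treating necessity I would first upgrade the given orientation-preserving self-homeomorphism of $\mathbb{R}^4$ carrying $F_0$ onto $F_1$ to an ambient isotopy: since the group of orientation-preserving homeomorphisms of $\mathbb{R}^4$ is connected (Alexander's trick), $F_0$ and $F_1$ may be assumed ambiently isotopic through a family $F_s=h_s(F_0)$, $s\in[0,1]$, which after a $C^\infty$-small perturbation is smooth.

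For necessity, the key is to study the projected family as a generic one-parameter family of maps of the surface $F$ into $\mathbb{R}^3$, remembering the lost fourth coordinate as height/color data. Applying the multijet transversality theorem (Thom--Boardman theory) to the space of one-parameter families of such maps, I would arrange that for all but finitely many parameter values $s_1<\cdots<s_k$ the projection $\pi(F_s)$ is a generic diagram whose image singularities are exactly the local types (i)--(vi) of Figure \ref{Singular}, and that at each exceptional value $s_j$ the family crosses a single codimension-one degeneracy transversally. Between consecutive exceptional values the diagram varies only by an isotopy of $\mathbb{R}^3$, so the entire problem reduces to the finitely many transitions occurring at the $s_j$.

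The core step is then the classification of these codimension-one transitions and their identification with the moves of Figure \ref{RoseB}. Each $s_j$ is a one-parameter degeneration of the multi-germ of the projection along the relevant stratum; morally these are of two kinds: \emph{horizontal} degenerations, in which the double point set together with its triple and branch points undergoes a bifurcation in $\mathbb{R}^3$ (a tangency or a collision of strata), and \emph{vertical} degenerations, in which two sheets become tangent in the height direction so that the color order changes while the image is momentarily non-generic. For embedded surfaces this enumeration is exactly Roseman's and yields the transitions (a), (b), (c), (e), (f), (g), with (d) redundant by \cite{Yas05}. The genuinely new feature of the immersed setting is the presence of the singular points of type (vi), the transverse self-intersections of $F$ in $\mathbb{R}^4$; the extra codimension-one degenerations produced by these points --- a singular point passing through a sheet, with the attendant reconfiguration of the nearby double curves --- are, by the analysis of \cite{AMW17}, generated by the single move (h). Concatenating the local replacements at $s_1,\dots,s_k$ and interpolating by ambient isotopy of the diagram produces the required finite sequence of moves.

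The main obstacle is precisely the completeness of this local classification once singular points are admitted: one must verify that \emph{every} generic codimension-one transition of the projection of a one-parameter family of immersed surfaces appears on the list (a)--(h), and in particular that no degeneration in which a singular point of type (vi) is created, cancelled, or interacts with a double curve, a branch point, a triple point, or a second singular point has been overlooked. This requires combining the transversality setup with an explicit finite-determinacy and versal-unfolding analysis of the relevant map-germs --- the cross-cap governing the branch points and the self-intersection germ governing the singular points --- together with their low-codimension adjacencies. The embedded strata are controlled by the arguments of \cite{Ros98, Yas05} and the singular strata by those of \cite{AMW17}, which together close the list and establish the theorem.
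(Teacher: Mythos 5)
This theorem is quoted by the paper from \cite{Ros98, AMW17} and is not proved there, so there is no in-paper argument to compare against; what you have written is an outline of the strategy actually used in those references. Your sketch is faithful to that strategy: sufficiency by realizing each local move as an ambient isotopy of $\mathbb{R}^4$ supported over a small ball, and necessity by upgrading the equivalence to an ambient isotopy, projecting the resulting one-parameter family, invoking multijet transversality to isolate finitely many codimension-one transitions, and matching each transition with a move from the list. This is the right skeleton, and you correctly locate the mathematical content in the classification of the codimension-one strata.

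The genuine gap is that this classification is asserted, not carried out, and it is the entire theorem: everything before it is soft. In particular, for the immersed case you must argue (i) that type (vi) singular points can neither be born nor cancelled under ambient isotopy of $\mathbb{R}^4$ --- otherwise additional birth/death moves would be required --- which follows from the fact that the transverse self-intersection points of the immersion in $\mathbb{R}^4$ are themselves preserved by isotopy, and (ii) that every generic interaction of a singular point with the rest of the double point set (passing through a sheet, sliding along a double curve toward a triple or branch point) reduces to the single move (h) together with the embedded-case moves; this is the substance of \cite{AMW17} and your proposal only names it as an obstacle. Two smaller points: the claim that an orientation-preserving self-homeomorphism of $\mathbb{R}^4$ can be replaced by a \emph{smooth} ambient isotopy needs care (Alexander's trick gives a topological isotopy to the identity; smoothing isotopies in dimension four is not automatic and is usually handled by working in the smooth or PL category throughout, as the cited sources do), and your sufficiency argument should also note that the height/color bookkeeping in Figure \ref{RoseB} restricts which colorings of each local picture are admissible, since only those correspond to genuine local isotopies of $\mathbb{R}^4$.
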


Those moves were helpful in establishing invariants of surface-link diagrams (see \cite{Kam17}) and diagrams without branch or triple points with respect to the appropriate equivalences in \cite{Sat01} and \cite{Jab12}, respectively. The relationship among the seven types of Roseman moves were clarified in \cite{Kaw15}, where it is shown that the type of moves (a, b, c, e, f, g) is the minimal generating set for Roseman moves. We extend this to the case of immersed surfaces.

\begin{proposition}
	The set of types of moves (a, b, c, e, f, g, h) is a minimal generating set for any finite combination of types of moves (a, b, c, e, f, g, h).
\end{proposition}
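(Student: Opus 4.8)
The plan is to read ``minimal generating set'' in the standard way: the seven move types trivially generate all finite combinations of themselves, so the whole content is the minimality, i.e.\ that no one move type is a consequence of the remaining six. I would split the argument according to the dichotomy emphasized in the statement, treating the six embedded-type moves $(a,b,c,e,f,g)$ separately from the singular move $(h)$: the independence among the six I would reduce to the known result of Kawamura \cite{Kaw15}, while the independence of $(h)$ from the six is the genuinely new point and I would prove it directly.

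For the six embedded moves, the key preliminary observation is that the number of singular points of type (vi) in a diagram equals the number of transverse self-intersection points of the immersion in $\mathbb{S}^4$; these are exactly the non-manifold points of the image, hence their number is preserved by any equivalence and therefore by every move. Now Kawamura's witnessing pairs for the independence of each of $(a,b,c,e,f,g)$ are diagrams of \emph{embedded} surface-links, so they carry no singular point. Since every diagram in a move-sequence joining two such witnesses represents the same embedded surface-link, it again has no singular point, and because the local model of $(h)$ contains a singular point, $(h)$ can never be applied along such a sequence. Thus two embedded witnesses are related by $(a,\dots,h)$ if and only if they are related by $(a,\dots,g)$ alone, and the minimality of the six among themselves follows verbatim from \cite{Kaw15}.

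For the singular move $(h)$, I would exhibit a quantity invariant under $(a,b,c,e,f,g)$ and under ambient isotopy of the diagram but changed by $(h)$. To each singular point one associates the germ of the double point set in a small neighbourhood, together with its height/colour data (in particular the way the height-ordering of the two crossing sheets switches across the point); recording the multiset of these germs over all singular points defines such a quantity $Q(D)$. The six moves are supported in balls matching the top six local models of Figure \ref{RoseB}, none of which contains a singular point, so a neighbourhood of each singular point is disjoint from the support of every $(a,\dots,g)$ move and is left untouched, while an isotopy merely permutes the singular points and preserves each germ; hence $Q$ is invariant under $(a,\dots,g)$ together with isotopy. The two sides of $(h)$, by contrast, present different germs at the singular point in its local model, so a single $(h)$-move changes $Q$. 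Consequently the two sides of that $(h)$-move cannot be joined by any finite sequence of $(a,\dots,g)$, and $(h)$ is independent of the remaining six.

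The main obstacle is the last verification: I expect the real work to be in checking that the two sides of $(h)$ are genuinely distinguished by an $(a,\dots,g)$-invariant germ — that is, that $(h)$ actually alters the combinatorial type at the singular point (the height-ordering of the two sheets meeting there, and the incidence of the double-point arcs) rather than merely sliding an unaffected sheet past it — and that this germ is well defined and isotopy-invariant as a local model. Once the germ data at a singular point is set up correctly, the invariance of $Q$ under $(a,\dots,g)$ and isotopy reduces to inspecting the supports of the moves, and the reduction to \cite{Kaw15} for the six embedded moves is then immediate, completing the minimality.
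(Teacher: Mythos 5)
Your first step---reducing the minimality of $(a,b,c,e,f,g)$ among the seven types to Kawamura's result, via the observation that any move sequence joining two embedded diagrams passes only through diagrams without singular points, on which $(h)$ is inapplicable---is sound, and is in fact spelled out more carefully than in the paper, which simply asserts that it suffices to prove the independence of $(h)$. The gap is in the second, essential half. Your proposed invariant $Q(D)$, the multiset of germs of the double point set at the singular points together with height data, does not change under move $(h)$. Move $(h)$ does not alter the singular point itself: it slides a \emph{third} sheet (equivalently, the triple point that sheet forms with the two sheets meeting at the singular point) across the singular point along the double point arc. In any sufficiently small ball around the singular point, one meeting only the two transversally intersecting sheets, the picture before and after $(h)$ is identical, so the germ is unchanged; and if you enlarge the ``germ'' so as to include the nearby triple point, it is no longer invariant under $(a,\dots,g)$, since moves of types $(b)$ and $(e)$ can create or cancel triple points on the very double point arc emanating from the singular point, changing which triple point is adjacent to it. You correctly flagged this verification as the main obstacle, but it is exactly where the approach breaks down: no purely local quantity at the singular point can detect $(h)$.

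The paper's proof uses a semi-global quantity instead: $f_*(D)=2(T_+-T_-)-(W_+-W_-)+(B_+-B_-)$, a signed count of triple points together with signed counts of checkerboard-colored branch points. By the Carter--Saito theorem this vanishes on every broken surface diagram of an \emph{embedded} surface-link, which forces its invariance under the local moves $(a,\dots,g)$; on the other hand, $(h)$ reverses the sign of the triple point involved while leaving all branch points untouched, so $f_*$ jumps by $\pm 4$ under each $(h)$ move. To repair your argument you would need to replace $Q$ by a quantity of this kind, one that is sensitive to which side of the singular point the third sheet lies on, rather than to the local structure of the singular point alone.
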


\begin{proof}
	The new set extends the minimal generating set (a, b, c, e, f, g) in the embedded surface case, by exactly one type -- move (h). It is therefore sufficient to prove the independence of (h) type move from any combination of a set of type (a, b, c, e, f, g) moves, i.e., that it cannot be a combination of these moves (and an ambient isotopy in $\mathbb{R}^3$).
	\par 
	Let $\mathcal D$ be a broken surface diagram of an oriented immersed surface-link $\mathcal L$. Let $\tau$ be a triple point of $\mathcal D$ intersecting three sheets that have relative positions top, middle, and bottom with respect to the projection direction of $p:\mathbb R^4\rightarrow \mathbb R^3.$ The \emph{sign} of the triple point $\tau$ is defined to be {\it positive} if the normal vectors of top, middle, bottom sheets in this order match the fixed orientation of the $3$-space $\mathbb R^3$. Otherwise, the \emph{sign} of $\tau$ is defined to be \emph{negative}. We use the right-handed rule convention for the orientation of $\mathbb{R}^3$.
	The generic surface in $\mathbb R^3$ divides into some regions. Then there exists a checkerboard coloring (white and black) of the regions such that the adjacent regions receive the opposite colors \cite{NS98}.
	\par 
	Let our semi-invariant of the diagram be $f_*(\mathcal D) = 2(T_+-T_-)-(W_+-W_-)+(B_+-B_-)$, where $T_+$ (respectively $T_-$) is the number of the positive (respectively negative) triple points; $B_+$ (respectively $B_-$) is the number of the positive (respectively negative) black branch points; $W_+$ (respectively $W_-$) is the number of the positive (respectively negative) white branch points. The neighborhood of a branch point on a
	generic surface in $\mathbb R^3$ looks like the cone on the figure eight. Then the branch point is \emph{black} (respectively \emph{white}) if the regions inside this figure eight is black (respectively white), for details see \cite{Sat00}. In \cite{CS97}, it is shown that $f_*(\mathcal D)=0$ for any broken surface diagram of an embedded surface in the four-space (an elementary proof can be found in \cite{Sat00}). Therefore, it must be invariant with respect to (a, b, c, e, f, g) type moves.
	\par
	Notice that the move (h) changes the sign of the triple point involved, and does not change either signs or colors of the branch points. Therefore, $f_*(\mathcal D)$ always changes by $\pm 4$ after performing an (h) move. To construct now two diagrams of isotopic immersed surface-links that differ by the value of this semi-invariant, take any diagram with at least one immersed point, and perform the move (h), taking the nearest regular sheet as the highest/lowest regular sheet taking part in the move.
\end{proof}

\section{Biquandle structure}\label{sec3}

A generalization of quandles (called biquandles) was introduced in \cite{KR03}. A \emph{biquandle} is an algebraic structure with two binary operations satisfying certain conditions, which can be presented by semi-sheets of surface-link projections as its generators modulo oriented  Roseman moves. In \cite{CES04}, J. S. Carter, M. Elhamdadi, and M. Saito introduced and used cocycles to define invariants via colorings of link diagrams by biquandles and a state-sum formulation.
\par 
In \cite{KKKL18}, S. Kamada, A. Kawauchi, J. Kim, and S. Y. Lee discussed the (co)homology theory of biquandles and developed the biquandle cocycle invariants for oriented surface-links by using broken surface diagrams generalizing quandle cocycle invariants. They showed how to compute the biquandle cocycle invariants from marked graph diagrams.

\begin{definition}
	Let $X$ be a set. A \textit{biquandle structure} on $X$ is a
	pair of maps $\utr,\otr:X\times X\to X$ satisfying:
	\begin{itemize}
		\item[(i)] for all $x\in X$, $x\utr x=x\otr x$,
		\item[(ii)] the maps $\alpha_y,\beta_y:X\to X$ for all $y\in X$ 
		and $S:X\times X\to X\times X$ defined by
		\[\alpha_y(x)=x\otr y,\quad \beta_y(x)=x\utr y\quad \mathrm{and}\quad
		S(x,y)=(y\otr x, x\utr y)\]
		are invertible, and
		\item[(iii)] for all $x,y,z\in X$ we have the \textit{exchange laws}:
		
		\begin{enumerate}
			\item 			$(x\utr y)\utr (z\utr y) =  (x\utr z)\utr(y\otr z),$
			\item 			$(x\otr y)\utr (z\otr y) =  (x\utr z)\otr(y\utr z),$
			\item 			$(x\otr y)\otr (z\otr y) =  (x\otr z)\otr(y\utr z).$
		\end{enumerate}
		
	\end{itemize}
	Axiom (ii) is equivalent to the \emph{adjacent labels rule},
	which says that in the ordered quadruple $(x,y,x\utr y,y\otr x)$, any two 
	neighboring entries (including $(y\otr x, x)$ determine the other two.
	A \emph{biquandle} is a set $X$ with a choice of biquandle structure.
\end{definition}

Let $X$ and $Y$ be biquandles. A function $f:X\rightarrow Y$ is called a {\it biquandle homomorphism} if $f({x}\utr{y})={f(x)}\utr {f(y)}$ and $f({x}\otr {y})={f(x)}\otr {f(y)}$ for any $x,y\in X.$ We denote the set of all biquandle homomorphisms from $X$ to $Y$ by ${\rm Hom}(X,Y)$. A bijective biquandle homomorphism is called a \emph{biquandle isomorphism}. Two biquandles $X$ and $Y$ are said to be \emph{isomorphic} if there is a biquandle isomorphism $f: X \to Y$. 
\par 
From now on, we assume that immersed surface-links are oriented with a fixed orientation. Let $\mathcal B$ be a broken surface diagram of an immersed surface-link $\mathcal L$ in $\mathbb R^4$ and let $S(\mathcal B)$ be the set of the semi-sheets in $\mathcal B$. For a given finite biquandle $X$, we define a {\it biquandle coloring} of $\mathcal B$ by $X$ to be a function $\mathcal C:S(\mathcal B) \rightarrow X$ satisfying the following condition. At a double point curve, two coordinate planes intersect locally, and one is the under-sheet and the other is the over-sheet. The under-sheet (resp. over-sheet) is broken into two semi-sheets, say $u_1$ and $u_2$ (resp. $o_1$ and $o_2$). A normal vector of the under-sheet (resp. over-sheet) points toward one of the components, say $o_2$ (resp. $u_2$). If $\mathcal C(u_1)=a$ and $\mathcal C(o_1)=b,$ then we require that $\mathcal C(u_2)=a\utr{b}$ and $\mathcal C(o_2)=b\otr{a}$, see Figure \ref{30_32} (left). We can see that around the triple point, the conditions are guaranteed by the biquandle axioms, see Figure \ref{30_32} (right). Additionally, around a singular point we require the relations $a\utr{b}=a\otr{b}$ and $b\utr{a}=b\otr{a}$ see Figure \ref{28_29}.

\begin{figure}[h!t]
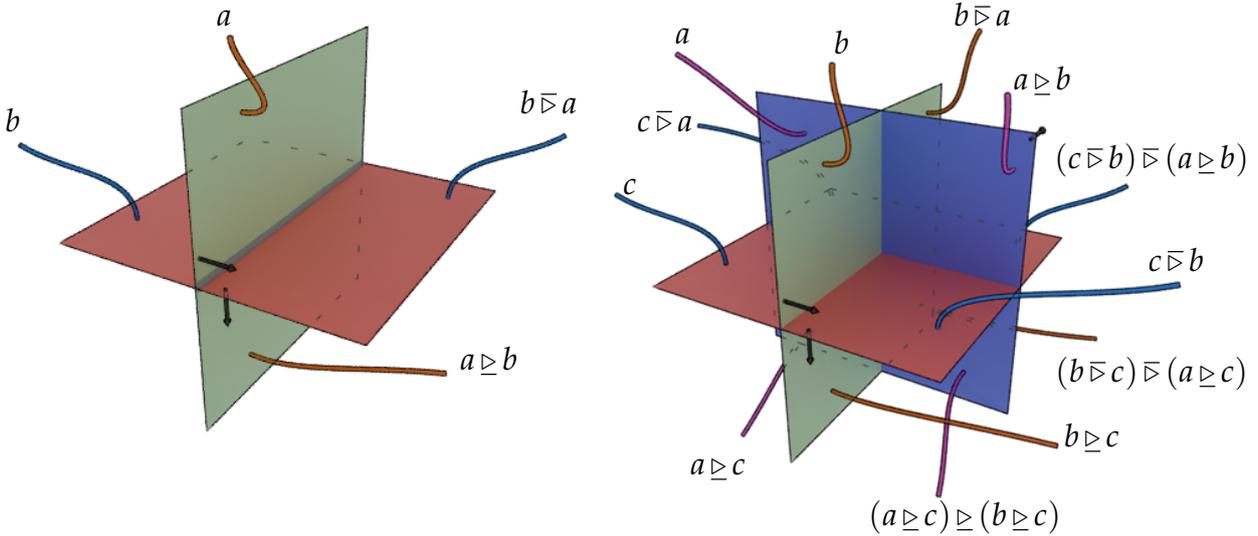

	\begin{center}
		\begin{lpic}[t(0.3cm),l(0.3cm),r(0.8cm),b(0.4cm)]{30_32.png(15.5cm)}
			\lbl[b]{0,110;$b$}
			\lbl[b]{60,140;$a$}
			\lbl[b]{152,115;$b\otr a$}
			\lbl[b]{135,40;$a\utr b$}
			\lbl[b]{190,135;$a$}
			\lbl[b]{235,132;$b$}
			\lbl[b]{175,92;$c$}
			\lbl[b]{185,110;$c\otr a$}
			\lbl[b]{275,140;$b\otr a$}
			\lbl[b]{292,120;$a\utr b$}
			\lbl[b]{200,10;$a\utr c$}
			\lbl[b]{306,18;$b\utr c$}
			\lbl[b]{330,70;$c\otr b$}
			\lbl[b]{323,97;$(c\otr b)\otr (a\utr b)$}
			\lbl[b]{323,36;$(b\otr c)\otr (a\utr c)$}
			\lbl[b]{270,-5;$(a\utr c)\utr (b\utr c)$}
			
		\end{lpic}
		
		\caption{Biquandle conditions around a double point and a triple point.\label{30_32}}
	\end{center}
\end{figure}

\begin{figure}[h!t]
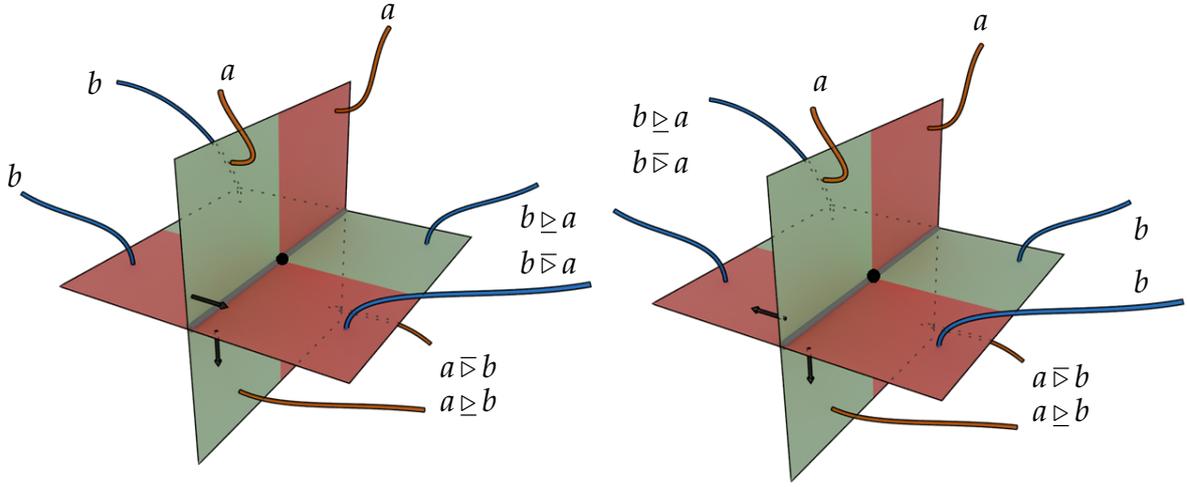

	\begin{center}
		\begin{lpic}[t(0.3cm),l(0.3cm),r(0.1cm),b(0.4cm)]{28_29.png(15.5cm)}
\lbl[b]{0,115;$b$}
\lbl[b]{30,150;$b$}
\lbl[b]{80,155;$a$}
\lbl[b]{140,178;$a$}
\lbl[b]{200,97;$b\utr a$}
\lbl[b]{200,82;$b\otr a$}
\lbl[b]{170,28;$a\utr b$}
\lbl[b]{170,43;$a\otr b$}
\lbl[b]{242,120;$b\otr a$}
\lbl[b]{242,135;$b\utr a$}
\lbl[b]{302,151;$a$}
\lbl[b]{362,174;$a$}
\lbl[b]{422,95;$b$}
\lbl[b]{422,75;$b$}
\lbl[b]{392,24;$a\utr b$}
\lbl[b]{392,39;$a\otr b$}
		\end{lpic}
		\caption{Biquandle conditions around a singular point.\label{28_29}}
	\end{center}
\end{figure}

The biquandle element $\mathcal C(s)$ assigned to a semi-sheet $s$ by a biquandle coloring is called a \emph{color} of $s$. Using biquandle axioms, it is easily checked that the above conditions are compatible at each triple point, branch point, and singular point of $\mathcal B$.
\par 
We denote by ${\rm Col}^B_X(\mathcal B)$ the set of all biquandle colorings of $\mathcal B$ by a biquandle $X$. 

\begin{theorem}[\cite{Car09, KKKL18}]\label{one-one}
	Let $\mathcal L$ be a surface-link and let $\mathcal B$ and $\mathcal B'$ be two broken surface diagrams of $\mathcal L$. Then for any finite biquandle $X$, there is a one-to-one correspondence between ${\rm Col}^{\rm B}_{X}(\mathcal B)$ and ${\rm Col}^{\rm B}_{X}(\mathcal B')$. 
\end{theorem}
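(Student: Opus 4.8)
The plan is to reduce the claim to a verification that is local to each move. By the move theorem cited above from \cite{Ros98, AMW17}, any two broken surface diagrams $\mathcal B$ and $\mathcal B'$ of the same immersed surface-link $\mathcal L$ are related by a finite sequence of moves of types $(a,b,c,e,f,g,h)$ together with an ambient isotopy of the diagram in $\mathbb R^3$. An ambient isotopy merely relabels the semi-sheets and hence induces an obvious bijection of coloring sets, and a composition of bijections is again a bijection. It therefore suffices to produce, for each single move, a bijection between the colorings of the diagram before the move and those after it.

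For a single move, fix a small ball $B\subset\mathbb R^3$ inside which the diagram changes and outside which it is unchanged. Every coloring of either diagram restricts to the same data on the boundary sphere $\partial B$, namely the colors of the semi-sheets that meet $\partial B$. The task then reduces to showing that each such boundary coloring extends in exactly one way to a valid coloring of each of the two local pictures; composing these two unique-extension maps gives the required bijection. Thus each case becomes a check of the biquandle coloring conditions inside $B$.

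For the embedded moves $(a,b,c,e,f,g)$ this verification is exactly the content of \cite{Car09, KKKL18}. In brief, the invertibility in axiom (ii) (equivalently the adjacent labels rule) yields unique extension across the moves that create or cancel double point curves; axiom (i), $x\utr x=x\otr x$, matches the colorings across the branch point moves; and the exchange laws (iii) are precisely the identities needed for the colorings around a triple point to agree, controlling the triple point move (compare Figure \ref{30_32}). We are thus left with the genuinely new singular move $(h)$.

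The verification for move $(h)$ is where the singular coloring relations enter, and it is the step I expect to be the main obstacle, being the only one not already covered by the embedded theory. Move $(h)$ alters the diagram in a neighborhood of a singular (Whitney umbrella) point, where the coloring rule imposes the extra identities $a\utr b=a\otr b$ and $b\utr a=b\otr a$ (Figure \ref{28_29}). Reading the semi-sheet colors on the two sides of the move and comparing them along $\partial B$, these identities are exactly what force the two local colorings to assign the same colors to the boundary semi-sheets: they collapse the over/under ambiguity at the double curve issuing from the singular point, so that a coloring on one side both determines and is determined by a coloring on the other. Invertibility from axiom (ii) once more guarantees that the extension is unique, so no coloring is lost or created. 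Assembling the per-move bijections along the sequence relating $\mathcal B$ and $\mathcal B'$ then yields the desired one-to-one correspondence between ${\rm Col}^{\rm B}_{X}(\mathcal B)$ and ${\rm Col}^{\rm B}_{X}(\mathcal B')$.
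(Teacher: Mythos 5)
Your proposal takes the same route as the paper: the theorem as stated (the embedded case) is quoted from \cite{Car09, KKKL18} without proof, and the paper's own contribution is the immersed extension (Proposition \ref{one-one2}), whose proof --- like yours --- reduces everything to a local unique-extension check for the single new move (h), using the singular relations $a\utr b=a\otr b$ and $b\utr a=b\otr a$ exactly as you describe (the paper carries out the explicit color bookkeeping in Figures \ref{h1}--\ref{h2} rather than arguing verbally). One correction: move (h) takes place near a \emph{singular point}, i.e.\ the image of a transverse double point of the immersion (case (vi) of Figure \ref{Singular}), not near a Whitney umbrella, which is a \emph{branch} point (cases (iv)--(v)); your parenthetical identification points to the wrong local model, although the rest of your description --- the double point curve issuing from the point and the collapse of the over/under ambiguity via the singular relations --- matches the correct picture.
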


We extend Theorem \ref{one-one} to the case of an immersed surface-link.

\begin{proposition}\label{one-one2}
	Let $\mathcal L$ be an immersed surface-link and let $\mathcal B$ and $\mathcal B'$ be two broken surface diagrams of $\mathcal L$. Then for any finite biquandle $X$, there is a one-to-one correspondence between ${\rm Col}^{\rm B}_{X}(\mathcal B)$ and ${\rm Col}^{\rm B}_{X}(\mathcal B')$.
\end{proposition}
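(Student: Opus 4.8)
The plan is to reduce the global statement to a single-move local check and then to treat the one genuinely new move, the singular move (h), by hand, since the remaining moves are already covered by the embedded theory. First I would invoke the move theorem of \cite{Ros98, AMW17}: the diagrams $\mathcal B$ and $\mathcal B'$ are related by a finite sequence $\mathcal B=\mathcal B_0\to\mathcal B_1\to\cdots\to\mathcal B_n=\mathcal B'$, where each step is either an ambient isotopy of $\mathbb R^3$ or a single move of type (a, b, c, e, f, g, h). Because a composition of one-to-one correspondences is again a one-to-one correspondence, it suffices to construct, for each step, a bijection ${\rm Col}^{\rm B}_X(\mathcal B_i)\to{\rm Col}^{\rm B}_X(\mathcal B_{i+1})$ and then compose along the sequence. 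An ambient isotopy does not change the combinatorics of the semi-sheets and therefore induces the identity on colorings. For a move of type (a, b, c, e, f, g) I would choose the supporting $3$-ball to contain no singular point, so that inside it the diagram is an ordinary embedded broken surface; the desired local bijection is then exactly the verification carried out in the embedded case, namely Theorem~\ref{one-one}. Thus everything reduces to move (h).

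For move (h) the argument is local. Let $B\subset\mathbb R^3$ be a $3$-ball supporting a single move (h), chosen so that $\mathcal B_i$ and $\mathcal B_{i+1}$ agree outside $B$. Every coloring of either diagram restricts to a coloring of the common exterior, and in particular to a labelling by $X$ of the finitely many semi-sheets that cross $\partial B$. I would define the candidate map $\Phi$ by fixing the colors on the exterior and re-coloring only the interior of $B$, and the claim is that $\Phi$ is a well-defined bijection. For this it is enough to show two things: that the interior coloring is uniquely determined by the boundary labelling, and that a boundary labelling extends to a valid coloring of $\mathcal B_i\cap B$ if and only if it extends to a valid coloring of $\mathcal B_{i+1}\cap B$.

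Uniqueness of the interior extension is immediate from the adjacent labels rule (axiom (ii) of the definition): the colors propagate inward from $\partial B$ along the double point curves, and at each triple and singular point inside $B$ the remaining colors are forced by the operations $\utr$ and $\otr$. The substance of the proof, and the step I expect to be the main obstacle, is the compatibility (existence) half at the singular point across which move (h) drags the triple point. Here I would trace the colors through the two local pictures of move (h) in the style of Figures~\ref{30_32} and~\ref{28_29}, writing down the system of biquandle equations coming from the double curves, the triple point, and the singular point on each side, and then showing the two systems have the same solution set as a function of the boundary labelling. The singular relations $a\utr b=a\otr b$ and $b\utr a=b\otr a$ are exactly what identify the over- and under-propagations across the singular crossing, so that the triple-point identities of Figure~\ref{30_32} yield the same constraints before and after the move; this is where the special shape of the singular condition is used, and I would expect the bookkeeping of orientations and normal directions at the singular point to be the most delicate part.

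Finally, since move (h) is its own inverse up to relabelling of the sheets, the symmetric construction produces an inverse to $\Phi$, so $\Phi$ is a bijection for that step. Composing the bijections obtained for all the steps $\mathcal B_0\to\cdots\to\mathcal B_n$ then gives the desired one-to-one correspondence between ${\rm Col}^{\rm B}_X(\mathcal B)$ and ${\rm Col}^{\rm B}_X(\mathcal B')$.
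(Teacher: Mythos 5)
Your proposal is correct and follows essentially the same route as the paper: reduce to a single local move via the move theorem, dispose of types (a, b, c, e, f, g) by the embedded-case result (Theorem~\ref{one-one}), and verify move (h) by tracing the colors through the two local pictures using the biquandle axioms together with the singular relations $a\utr b=a\otr b$ and $b\utr a=b\otr a$. The paper's proof simply delegates this last check to Figures~\ref{h1}--\ref{h2}, which carry out exactly the bookkeeping you describe.
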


\begin{proof}
	We are left to verify the claim of Proposition \ref{one-one2} for the case that $\mathcal B$ and $\mathcal B'$ differ by move (h) and all its possible variations. The uniqueness of the coloring is presented in Figures \ref{h1}--\ref{h2}. We use the biquandle axioms and the singular relations $a\utr{b}=a\otr{b}$ and $b\utr{a}=b\otr{a}$.
\end{proof}

We call the cardinal number $\#{\rm Col}^B_X(\mathcal L)$ the \emph{biquandle $X$-coloring number} of $\mathcal L$. This invariant can naturally be computed from the singular marked diagram \cite{Jab25}.

\begin{figure}[h!t]
	\begin{center}
		\begin{lpic}[b(0.9cm),t(0.2cm),l(0.7cm)]{h1b.png(15.5cm)}
			\lbl[b]{210,-50;\framebox[2cm]{$-\theta(a, b, c)$}}
			\lbl[b]{670,-50;\framebox[2cm]{$+\theta(b, a, c)$}}
			\lbl[b]{122, 365;$a$}
			\lbl[b]{222, 360;$a\utr c$}
			\lbl[b]{320, 320;$c$}
			\lbl[b]{390, 245;$b\utr c$}
			\lbl[b]{380, 170;$b$}
			\lbl[b]{300, 10;$c\otr b$}
			\lbl[b]{60, 20;$a\otr b$}
			\lbl[b]{15, 240;$b\utr a$}
			\lbl[b]{80, 350;$c\otr a$}
			\lbl[b]{35, 284;$(b\utr a)\utr(c\otr a)$}
			\lbl[b]{170, -7;$(c\otr b)\otr(a\otr b)$}
			\lbl[b]{35, 71;$(a\otr b)\utr(c\otr b)$}
			
			\lbl[b]{592, 365;$a$}
			\lbl[b]{692, 360;$a\utr c$}
			\lbl[b]{790, 320;$c$}
			\lbl[b]{860, 245;$b\utr c$}
			\lbl[b]{850, 170;$b$}
			\lbl[b]{770, 10;$c\otr b$}
			\lbl[b]{530, 20;$a\otr b$}
			\lbl[b]{485, 240;$b\utr a$}
			\lbl[b]{550, 350;$c\otr a$}
			\lbl[b]{480, 282;$(b\utr a)\utr(c\otr a)$}
			\lbl[b]{640, -7;$(c\otr b)\otr(a\otr b)$}
			\lbl[b]{505, 71;$(a\otr b)\utr(c\otr b)$}
			
		\end{lpic}
		\caption{Case h1.\label{h1}}
	\end{center}
\end{figure}

\section{Biquandle cocycle condition}\label{sec4}

We assume familiarity with rack and quandle homology theories (see \cite{CKS04, Kam17}). 

\begin{lemma}\cite{KKKL18}
	A homomorphism $\theta: C^{\rm BR}_3(X) \to A$ is a 3-cocycle of the biquandle cochain complex $C^\ast_{\rm BQ}(X;A)$ if and only if $\theta$ satisfies the following two conditions:
	\begin{itemize}
		\item [(i)] $\theta(a,a,b)=0$ and $\theta(a,b,b)=0$ for all $a, b \in X$.
		\item [(ii)] $\theta(b,c,d)+\theta(a,b,d)+\theta(a\utr{b},c\otr{b},d\otr{b})+\theta(a\utr{d},b\utr{d},c\utr{d})$
		$=\theta(a,c,d)+\theta(a,b,c)+\theta(b\otr{a},c\otr{a},d\otr{a})+\theta(a\utr{c},b\utr{c},d\otr{c})$ for all $a, b, c, d \in X$.
	\end{itemize}
\end{lemma}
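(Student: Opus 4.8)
The plan is to unwind the definition of the biquandle cochain complex $C^\ast_{\rm BQ}(X;A)$ and to separate the statement ``$\theta$ is a $3$-cocycle'' into two logically distinct requirements: that $\theta$ descend from the rack-type cochain complex to the biquandle cochain complex, and that its coboundary vanish. Recall that $C^{\rm BR}_n(X)$ is the free abelian group on $X^n$, that the biquandle boundary $\partial_n\colon C^{\rm BR}_n\to C^{\rm BR}_{n-1}$ is an alternating sum, over the deletion of each coordinate, of a \emph{no-action} face (which simply omits the $i$-th entry) and an \emph{action} face (which omits the $i$-th entry while acting on every earlier entry by $\utr x_i$ and on every later entry by $\otr x_i$), and that the cochain complex is obtained by applying ${\rm Hom}(-,A)$ with coboundary $\delta=\partial^\ast$.

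First I would write out $\partial_4(a,b,c,d)$ explicitly. Deleting positions $1,2,3,4$ in turn produces the no-action faces $(b,c,d),(a,c,d),(a,b,d),(a,b,c)$ and the action faces $(b\otr a,c\otr a,d\otr a)$, $(a\utr b,c\otr b,d\otr b)$, $(a\utr c,b\utr c,d\otr c)$, $(a\utr d,b\utr d,c\utr d)$, carrying the sign $(-1)^{i+1}$ on the no-action face at position $i$ and $(-1)^{i}$ on the corresponding action face. Evaluating $\delta\theta=\theta\circ\partial_4$ on the generator $(a,b,c,d)$ and collecting the positively signed terms on one side yields exactly condition (ii); this is a direct substitution with no real difficulty beyond the sign bookkeeping.

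Next I would treat condition (i), which encodes that $\theta$ descends from the rack cochain complex to the biquandle cochain complex. The biquandle chain groups are the quotient of the rack chain groups by the degenerate subcomplex $C^{\rm D}_\ast$, generated in each degree by tuples having two equal consecutive entries (the subcomplex property relies on the biquandle axiom $x\utr x=x\otr x$), and a rack cochain gives a biquandle cochain precisely when it annihilates this subcomplex. In degree three the degenerate generators are $(a,a,b)$ and $(a,b,b)$, so the descent requirement is exactly $\theta(a,a,b)=0$ and $\theta(a,b,b)=0$. To finish I would record the compatibility of the two conditions: since $C^{\rm D}_\ast$ is a subcomplex, $\partial_4$ sends degenerate $4$-chains into degenerate $3$-chains, so once $\theta$ vanishes on $C^{\rm D}_3$ the coboundary $\delta\theta$ automatically vanishes on $C^{\rm D}_4$; hence $\delta\theta=0$ in $C^\ast_{\rm BQ}$ is equivalent to $\theta(\partial_4(a,b,c,d))=0$ for all $a,b,c,d\in X$, i.e.\ to (ii). Conversely, (i) and (ii) together produce a biquandle cochain with $\delta\theta=0$, giving both implications.

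The main obstacle I anticipate is organizational rather than conceptual: correctly pinning down the face maps and their signs in the biquandle boundary, in particular that earlier entries are acted on by the under-operation $\utr$ and later entries by the over-operation $\otr$, since a single misplaced action or sign would produce an equation differing from (ii). Once the boundary formula is fixed, both conditions follow by inspection.
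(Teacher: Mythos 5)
Your proposal is correct and is exactly the standard argument: the paper itself gives no proof of this lemma (it is quoted from \cite{KKKL18}), and the derivation there is precisely your unwinding of $\partial_4$ into no-action and action faces (yielding (ii) after sign bookkeeping) together with the vanishing on the degenerate subcomplex spanned by $(a,a,b)$ and $(a,b,b)$ (yielding (i)). Your face-map and sign conventions reproduce condition (ii) verbatim up to an overall sign, so nothing further is needed.
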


\begin{definition}
	We say that $\theta$ is a \emph{singular $3$-cocycle} if it is a $3$-cocycle (i.e., fulfills biquandle $3$-cocycle conditions (i)--(ii)) with the following additional condition.
	
	$(iii)\;$ $\theta(b,a,c)=-\theta(a,b,c)$ and $\theta(c,b,a)=-\theta(c,a,b)$ for all $a, b, c \in X$ such that $b\otr a=b \utr a$ and $a\otr b=a \utr b$.
\end{definition}

Recall the definition of the state-sum invariant for
a surface-link $\mathcal L$ embedded in $\mathbb R^4$, following \cite{KKKL18}. Fix a finite biquandle $X$, an abelian group $A$ written multiplicatively, and a biquandle $3$-cocycle $\theta$. Let $R$ be the source region of a triple point $\tau$ in $\mathcal B$, that is, the octant from which all normal vectors of the three sheets point outwards. For a given biquandle $X$-coloring $\mathcal C$ of $\mathcal B$, let $a,b$, and $c$ be the colors of the bottom, middle, and top sheets, respectively, that bound the source region $R$. Set $\epsilon(\tau)=1$ or $-1$ according as $\tau$ is positive or negative, respectively. The \emph{Boltzmann weight} $W^B_\theta(\tau,\mathcal C)$ at $\tau$ with respect to $\mathcal C$ is defined by $$W^B_\theta(\tau,\mathcal C)=\theta(a,b,c)^{\epsilon(\tau)}\in A.$$ For example, see Figure \ref{28_29} (right), where $\epsilon(\tau)=1$, so the Boltzmann weight of that crossing is $\theta(a,b,c)$.

\begin{figure}[h!t]
	\begin{center}
		\begin{lpic}[b(0.9cm),t(0.2cm),l(0.7cm)]{h2b.png(15.5cm)}
	\lbl[b]{210,-50;\framebox[2cm]{$-\theta(c, a, b)$}}
	\lbl[b]{670,-50;\framebox[2cm]{$+\theta(c, b, a)$}}
	\lbl[b]{122, 365;$a$}
	\lbl[b]{222, 360;$a\otr c$}
	\lbl[b]{320, 320;$c$}
	\lbl[b]{390, 245;$b\otr c$}
	\lbl[b]{380, 170;$b$}
	\lbl[b]{300, 10;$c\utr b$}
	\lbl[b]{60, 20;$a\otr b$}
	\lbl[b]{15, 240;$b\utr a$}
	\lbl[b]{80, 350;$c\utr a$}
	\lbl[b]{35, 284;$(b\utr a)\otr(c\utr a)$}
	\lbl[b]{170, -7;$(c\utr b)\utr(a\otr b)$}
	\lbl[b]{35, 71;$(a\otr b)\otr(c\utr b)$}
	
	\lbl[b]{592, 365;$a$}
	\lbl[b]{692, 360;$a\otr c$}
	\lbl[b]{790, 320;$c$}
	\lbl[b]{860, 245;$b\otr c$}
	\lbl[b]{850, 170;$b$}
	\lbl[b]{770, 10;$c\utr b$}
	\lbl[b]{530, 20;$a\otr b$}
	\lbl[b]{485, 240;$b\utr a$}
	\lbl[b]{550, 350;$c\utr a$}
	\lbl[b]{480, 282;$(b\utr a)\otr(c\utr a)$}
	\lbl[b]{640, -7;$(c\utr b)\utr(a\otr b)$}
	\lbl[b]{505, 71;$(a\otr b)\otr(c\utr b)$}
	
\end{lpic}
		\caption{Case h2.\label{h2}}
	\end{center}
\end{figure}

\begin{definition}
	Let $\mathcal B$ be a broken surface diagram of a surface-link and let $\theta$ be a biquandle $3$-cocycle. The \emph{state-sum} or \emph{partition function} of $\mathcal B$ (associated with $\theta$) is defined to be the sum 
	$$\Phi^B_\theta(\mathcal B; A)=\sum_{\mathcal C \in {\rm Col}^B_X(\mathcal B)}\prod_{\tau\in T(\mathcal B)}W^B_\theta(\tau,\mathcal C),$$ 
	where $T(\mathcal B)$ denotes the set of all triple points of $\mathcal B$ and the state-sum $\Phi^B_\theta(\mathcal B; A)$ is an element of the group ring $\mathbb Z[A].$ 
\end{definition}

\begin{theorem}[\cite{KKKL18}]
	Let $\mathcal L$ be a surface-link and let $\mathcal B$ be a broken surface diagram of $\mathcal L$. For a given biquandle $3$-cocycle $\theta$, the state-sum $\Phi^B_\theta(\mathcal B; A)$ is an invariant of $\mathcal L$. 
	(It is denoted by $\Phi^B_\theta(\mathcal L; A)$.)  
\end{theorem}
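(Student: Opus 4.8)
The plan is to reduce the claimed invariance to a local check on each generating move. By the Roseman move theorem for embedded surface-links (moves $(a,b,c,e,f,g)$), it suffices to show that $\Phi^B_\theta(\mathcal B;A)=\Phi^B_\theta(\mathcal B';A)$ whenever $\mathcal B$ and $\mathcal B'$ differ by a single such move, together with an ambient isotopy of $\mathbb R^3$, under which the state-sum is manifestly unchanged. By Theorem \ref{one-one} the coloring sets $\mathrm{Col}^B_X(\mathcal B)$ and $\mathrm{Col}^B_X(\mathcal B')$ are in canonical bijection, and this bijection is realized locally: a coloring $\mathcal C$ of $\mathcal B$ agrees with its image $\mathcal C'$ outside the region where the move is performed. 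Hence it is enough to prove, for each move and each corresponding pair $(\mathcal C,\mathcal C')$, the equality of Boltzmann-weight products
\[
\prod_{\tau\in T(\mathcal B)}W^B_\theta(\tau,\mathcal C)=\prod_{\tau'\in T(\mathcal B')}W^B_\theta(\tau',\mathcal C').
\]
Since all triple points outside the move region contribute identical factors and cancel, only the triple points inside the region need comparison.

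Most moves contribute nothing to this comparison. The moves acting only on the pattern of double-point curves away from any triple point create or destroy no triple points, so both products range over the same multiset of weights and the equality is immediate; similarly a move that merely slides a sheet across a double-point curve, without producing a quadruple point, leaves the set of triple points and the colors bounding each source region unchanged. The branch-point moves are handled by condition (i). When a branch point is introduced or removed, the double-point curve terminates there, and any triple point appearing in its neighborhood has two of its three bounding sheets belonging to the single sheet that folds at the branch point; consequently the colors read off at its source region are degenerate, of the form $(a,a,b)$ or $(a,b,b)$, so its weight is $\theta(a,a,b)^{\pm1}=1$ or $\theta(a,b,b)^{\pm1}=1$ by condition (i), and such trivial factors may be inserted or deleted freely.

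The crux is the tetrahedral move, in which a sheet is pushed through a triple point so as to create a quadruple point in $\mathbb R^4$; on the two sides of the move the diagram carries four triple points each, lying on the boundary of that quadruple point. The hard part will be the bookkeeping. One labels the four sheets by colors $a,b,c,d$ (bottom to top) and reads off, for each of the eight triple points, the colors of the three sheets bounding its source region together with its sign $\epsilon(\tau)$. These colors come out as exactly the arguments occurring in condition (ii): on one side the triples $(b,c,d)$, $(a,b,d)$, $(a\utr b,c\otr b,d\otr b)$, $(a\utr d,b\utr d,c\utr d)$, and on the other side $(a,c,d)$, $(a,b,c)$, $(b\otr a,c\otr a,d\otr a)$, $(a\utr c,b\utr c,d\otr c)$. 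After the signs are determined — they dictate whether each weight or its inverse appears — the identity $\prod_{\tau\in T(\mathcal B)}W^B_\theta=\prod_{\tau'\in T(\mathcal B')}W^B_\theta$ becomes precisely the multiplicative form of the biquandle $3$-cocycle condition (ii), which holds by hypothesis. Establishing the correct sign and source-region color assignment for all eight triple points from the geometry of the quadruple point is the only genuinely delicate step; once it is in place, summing the established term-by-term equalities over all colorings yields $\Phi^B_\theta(\mathcal B;A)=\Phi^B_\theta(\mathcal B';A)$, and invariance follows.
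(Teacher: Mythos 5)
This theorem is stated in the paper as an imported result, credited to \cite{KKKL18}, and the paper supplies no proof of it; so there is no in-paper argument to compare against, and your proposal has to be judged against the standard proof in that reference. Your outline does follow that standard proof: reduce to the generating Roseman moves, use the local bijection of colorings from Theorem \ref{one-one}, cancel the weights of triple points outside the move region, use cocycle condition (i) for the move in which a branch point passes through a sheet (the new triple point has degenerate source colors $(a,a,b)$ or $(a,b,b)$), and identify the tetrahedral (quadruple-point) move with cocycle condition (ii). That is the right architecture.

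Two soft spots. First, your claim that every non-tetrahedral, non-branch-point move ``leaves the set of triple points and the colors bounding each source region unchanged'' is not quite right: among the generating moves there is one that creates or annihilates a \emph{pair} of triple points (pushing a disk of one sheet across the double curve of two others). For that move the multiset of triple points changes, and you must argue separately that the two new triple points carry the same source-region colors but opposite signs, so that their Boltzmann weights are mutually inverse and the product is unchanged. This case is not covered by condition (i) or (ii) and needs its own (short) argument. Second, you explicitly defer the sign and source-region bookkeeping at the quadruple point; that is precisely the nontrivial content of the tetrahedral case, and as written the proposal asserts rather than verifies that the eight triples and their exponents line up with the two sides of condition (ii). Both gaps are fillable by standard computations, but as it stands the proposal is an outline of the proof in \cite{KKKL18} rather than a complete proof.
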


We can extend the invariant to the immersed case.

\begin{theorem}
	 The state-sum $\Phi^B_\theta(\mathcal L; A)$, for a singular biquandle $3$-cocycle $\theta$, computed on any broken (immersed) surface diagram for an immersed surface-link $\mathcal L$ is an invariant of $\mathcal L$.
\end{theorem}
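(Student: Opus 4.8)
The plan is to reduce the whole statement to the single move (h), using the classification of diagrams and the already-established embedded-case invariance. By the move-classification theorem (\cite{Ros98, AMW17}), two broken surface diagrams $\mathcal B,\mathcal B'$ of equivalent immersed surface-links are connected by a finite sequence of the moves (a, b, c, e, f, g, h) together with an ambient isotopy of $\mathbb R^3$. An ambient isotopy of the diagram changes neither the colors, nor the signs, nor the source octants of the triple points, hence leaves $\Phi^B_\theta$ unchanged. Each of the moves (a, b, c, e, f, g) is a local replacement supported in a ball containing no singular point, so it is literally an embedded Roseman move; since a singular $3$-cocycle is in particular a biquandle $3$-cocycle, the verification of invariance in \cite{KKKL18} applies verbatim and $\Phi^B_\theta$ is preserved under each of them. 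Thus everything comes down to proving invariance under the one remaining move (h).

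For move (h) I would argue coloring-by-coloring. Proposition \ref{one-one2} supplies a bijection between ${\rm Col}^B_X(\mathcal B)$ and ${\rm Col}^B_X(\mathcal B')$ when $\mathcal B$ and $\mathcal B'$ differ by (h); because the move is supported in a small ball and the bijection is the identity on all semi-sheets outside that ball, the products of Boltzmann weights coming from triple points lying outside the ball agree term-by-term for a coloring and its image. Hence it suffices, for each fixed coloring and each variant of (h), to compare only the local contribution of the triple point passing through the supporting ball.

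The heart of the matter is this local computation, recorded in the two cases of Figures \ref{h1} and \ref{h2}. In each case the move flips the sign of the triple point it passes through while transposing two adjacent source-region colors: in Case h1 the weight changes from $\theta(a,b,c)^{-1}$ to $\theta(b,a,c)^{+1}$, and in Case h2 from $\theta(c,a,b)^{-1}$ to $\theta(c,b,a)^{+1}$. The transposed pair is exactly the pair of colors meeting along the singular point, so by the singular-point coloring condition it satisfies $a\utr b=a\otr b$ and $b\utr a=b\otr a$. This is precisely the hypothesis of condition (iii) for a singular $3$-cocycle, which then gives $\theta(a,b,c)=-\theta(b,a,c)$ in Case h1 (via the relation swapping the first two arguments) and $\theta(c,a,b)=-\theta(c,b,a)$ in Case h2 (via the relation swapping the last two arguments). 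In multiplicative notation these read $\theta(a,b,c)^{-1}=\theta(b,a,c)$ and $\theta(c,a,b)^{-1}=\theta(c,b,a)$, so the local weight is unchanged and $\Phi^B_\theta$ is preserved by (h).

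The step I expect to be the main obstacle is the bookkeeping in this local analysis: one must confirm that (h) does no more than flip one triple-point sign and transpose a single adjacent pair of source-region colors, and that this pair is always the one for which the singular relations hold. The antisymmetry of condition (iii) was designed for exactly these transpositions, its two relations (swapping the first two versus the last two arguments) matching Cases h1 and h2; checking that every mirror and orientation variant of (h) falls under one of these two relations is routine but must be done to close the argument. Combining the isotopy invariance, the invariance under (a, b, c, e, f, g), and the invariance under (h) over every coloring shows that $\Phi^B_\theta$ takes equal values on any two diagrams of $\mathcal L$, so it is an invariant of the immersed surface-link.
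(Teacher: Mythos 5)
Your proposal is correct and follows essentially the same route as the paper: reduce to move (h) via the embedded-case result of \cite{KKKL18}, use the coloring bijection of Proposition \ref{one-one2} to localize the comparison to the affected triple point, and then invoke the two antisymmetry relations of condition (iii) exactly as recorded in Figures \ref{h1} and \ref{h2} (the sign flip together with the transposition $\theta(a,b,c)^{-1}\mapsto\theta(b,a,c)$ in Case h1 and $\theta(c,a,b)^{-1}\mapsto\theta(c,b,a)$ in Case h2). The only difference is that you spell out the reduction and the locality of the coloring bijection more explicitly than the paper does.
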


\begin{proof}
	It suffices to check invariance under move (h). As shown in Figures  \ref{h1}--\ref{h2} (with co-orientation of the sheets indicated by a black segment), the Boltzmann weight at the affected triple point changes sign, and the two arguments corresponding to the sheets forming the singular point are exchanged. By condition (iii) of a singular $3$-cocycle (written additively; multiplicatively in the state-sum), this leaves the product of Boltzmann weights over all triple points unchanged. Hence, the state-sum is invariant.
	
\end{proof}

\begin{figure}[h!t]
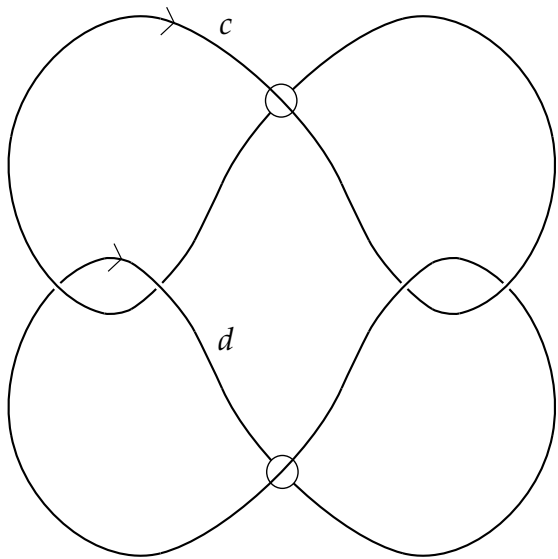

	\begin{center}
		\begin{lpic}[]{fr2(8cm)}
			\lbl[b]{82,79;$d$}
			\lbl[b]{82,184;$c$}
			\lbl[b]{40,115;$g$}
			\lbl[t]{40,87;$h$}
		\end{lpic}
		\caption{The Fenn--Rolfsen link.\label{fr2}}
	\end{center}
\end{figure}

\begin{example}
	
Let us consider the Fenn--Rolfsen link $FR$, shown as the singular marked diagram $D$ in Figure \ref{fr2}. This is the same link as considered in \cite{Jab23}, where we defined the fundamental quandle for an immersed surface-link and computed the coloring invariant for $FR$. We want to compute the state-sum invariant $\Phi(FR)$ that we defined earlier in this paper. 

We give two calculations of the state-sum invariant, first with a quandle biquandle, and then for a non-quandle biquandle.

\end{example}

%######################## quandle biquandle
Each quandle $(Q, *)$ is a special case of a biquandle $(Q, \utr, \otr)$ with the operations $\utr = *$ and $\otr$ is a trivial operation (projection on the first factor).
\par 
We can give the presentation of the fundamental quandle, given the generators ($c, d$) as in Figure \ref{fr2}, $\mathcal Q(FR)=\left<c, d\;|\; c*d=c*(c*d), d*c=d*(c*d), c*(c*d)=c, d*(d*c)=c \right>$.
\par 
We can now use the same method as in \cite[section 7]{KKKL18} and compute the quandle cocycle invariant from both resolutions $L_-(D)$ and $L_{B+}(D)$ of the singular marked graph diagram. That is because a small neighborhood around each triple point need not contain any singular point; therefore, we need only to track the Reidemeister moves of type III in the movie for the resolutions that resolve to trivial unlinks without any crossings.
\par
We present the movies for each resolution in Figures \ref{fr2_Lp}--\ref{fr2_Lm}, which  are based on \cite[p.111-112]{CKS04}. We notice that the unlinking movies of each resolution contain four Reidemeister moves of type III (contributing to our state-sum) and reducing Reidemeister moves of type I (which only change the number of branch points), and some Reidemeister II moves. Contributions of each triple point, marked as $T$ in a triangle involved in the corresponding Reidemeister III move, are shown below the image (the label $\pm(x, y, z)$ corresponds to $\theta(x, y, z)^{\pm 1}$ in the multiplicative convention).

\begin{figure}[h!t]
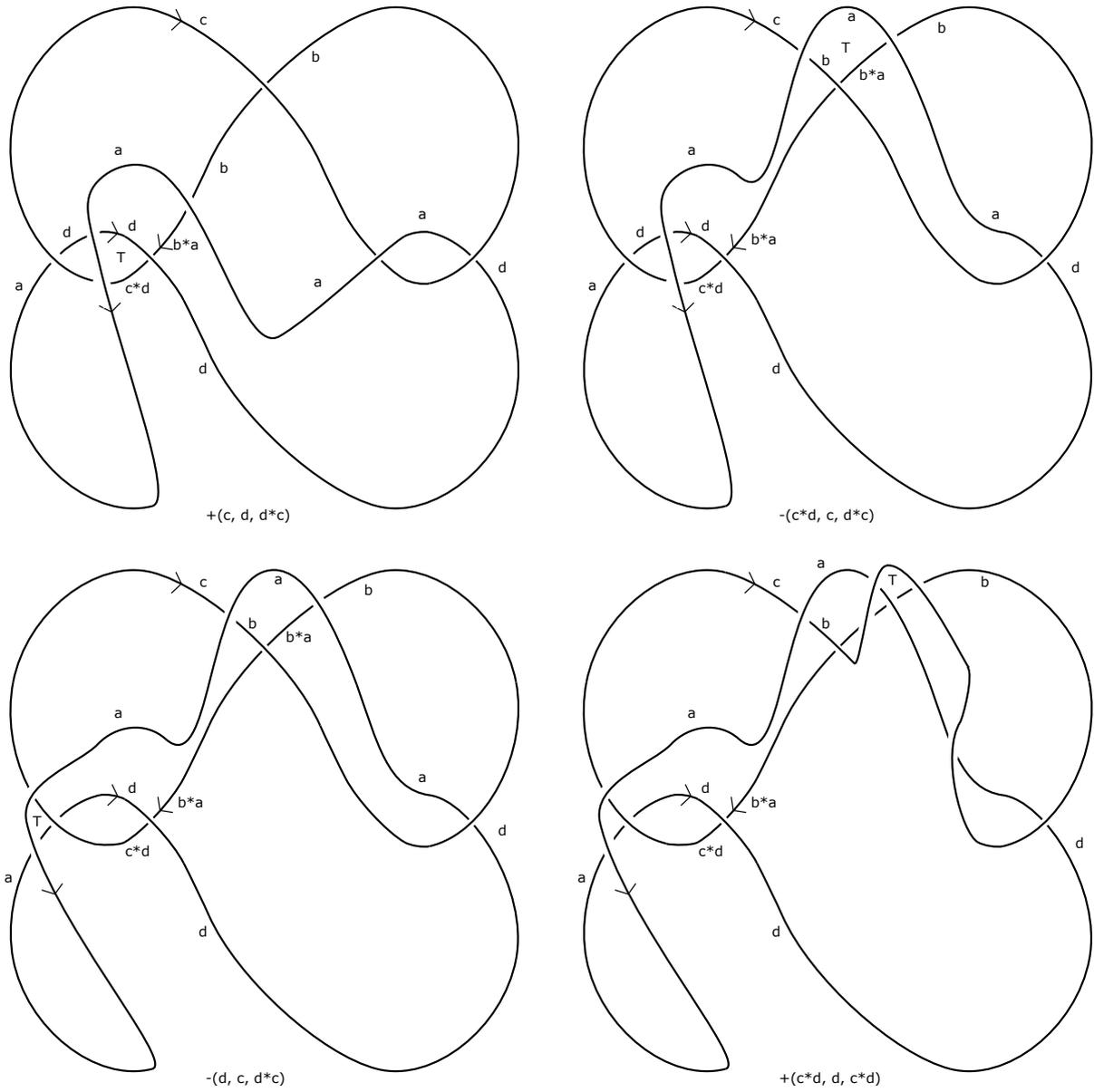

	\begin{center}
		\begin{lpic}[]{fr2_Lp_1(8.2cm)}
		\end{lpic}
		\begin{lpic}[]{fr2_Lp_2(8.2cm)}
		\end{lpic}
		\begin{lpic}[]{fr2_Lp_3(8.2cm)}
		\end{lpic}
		\begin{lpic}[]{fr2_Lp_4(8.2cm)}
		\end{lpic}
		\caption{$L_{B+}$ resolution of $D$.\label{fr2_Lp}}
	\end{center}
\end{figure}

\begin{figure}[h!t]
	\begin{center}
		\begin{lpic}[]{fr2_Lm_1(8.2cm)}
		\end{lpic}
		\begin{lpic}[]{fr2_Lm_2(8.2cm)}
		\end{lpic}
		\begin{lpic}[]{fr2_Lm_3(8.2cm)}
		\end{lpic}
		\begin{lpic}[]{fr2_Lm_4(8.2cm)}
		\end{lpic}
		\caption{$L_{-}$ resolution of $D$.\label{fr2_Lm}}
	\end{center}
\end{figure}
 
Calculations of the state-sum $\Phi$ for a one general quandle coloring $\mathcal C$ generated by elements $c, d$ give the summand (where $\overline{*}$ is the left-inverse of $*$):
\ \\

$$\displaystyle \Phi^Q_{\theta|\mathcal C}(FR):=\theta(c, d, d*c)\cdot\theta(c*d, c, d*c)^{-1}\cdot\theta(d, c, d*c)^{-1}\cdot\theta(c*d, d, c*d)\cdot$$

$$\cdot\theta(c\overline{*}d, d*c, d)^{-1}\cdot\theta(c\overline{*}d, c, d)\cdot\theta(d, c, d)\cdot\theta(c\overline{*}d, d, c)^{-1}.$$
\ \\

Let a quandle on $Q=\mathbb{Z}_9$ be the quandle with the operation given by the following matrix. 
\[
\renewcommand{\arraystretch}{1.04}
\begin{array}{r|rrrrrrrrr}
	* & 0 & 1 & 2 & 3 & 4 & 5 & 6 & 7 & 8\\\hline
	0 & 0 & 6 & 3 & 0 & 6 & 3 & 0 & 6 & 3\\
	1 & 4 & 1 & 7 & 4 & 1 & 7 & 4 & 1 & 7\\
	2 & 8 & 5 & 2 & 8 & 5 & 2 & 8 & 5 & 2\\
	3 & 3 & 0 & 6 & 3 & 0 & 6 & 3 & 0 & 6\\
	4 & 7 & 4 & 1 & 7 & 4 & 1 & 7 & 4 & 1\\
	5 & 2 & 8 & 5 & 2 & 8 & 5 & 2 & 8 & 5\\
	6 & 6 & 3 & 0 & 6 & 3 & 0 & 6 & 3 & 0\\
	7 & 1 & 7 & 4 & 1 & 7 & 4 & 1 & 7 & 4\\
	8 & 5 & 2 & 8 & 5 & 2 & 8 & 5 & 2 & 8
\end{array}
\]

\[
\begin{aligned}
	\theta_S={}&
	\chi_{(0,2,1)}
	\chi_{(0,2,4)}
	\chi_{(0,2,7)}
	\chi_{(0,5,1)}
	\chi_{(0,5,4)}
	\chi_{(0,5,7)}
	\chi_{(0,8,1)}
	\chi_{(0,8,4)}
	\chi_{(0,8,7)}
	\\
	&\cdot
	\chi_{(0,1,2)}^{-1}
	\chi_{(0,1,5)}^{-1}
	\chi_{(0,1,8)}^{-1}
	\chi_{(0,4,2)}^{-1}
	\chi_{(0,4,5)}^{-1}
	\chi_{(0,4,8)}^{-1}
	\chi_{(0,7,2)}^{-1}
	\chi_{(0,7,5)}^{-1}
	\chi_{(0,7,8)}^{-1},
\end{aligned}
\]where $\chi_{(a,b,c)}{(x,y,z)}$ is defined to be $u$ if ${(x,y,z)}={(a,b,c)}$ and $1$ otherwise.\\

We verified computationally that this function satisfies the normalized
biquandle \(3\)-cocycle equations together with the two additional
singular antisymmetry conditions. Hence, \(\theta\) is a singular
biquandle \(3\)-cocycle with coefficients in
$\mathbb{Z}_{3}=\langle u\mid u^{3}=1\rangle .$

\par 
Therefore,
$$\Phi^Q_{\theta_S}(FR)=\sum_{\mathcal C\in Col_Q(FR)}\Phi^Q_{\theta_S|\mathcal C}(FR)=9.$$

{\bf An example involving a non-quandle biquandle.}

%######################## non-quandle biquandle
We can give the presentation of the fundamental biquandle, given the generators ($g, h$) as in Figure \ref{fr2}, \\

$\mathcal BQ(FR)=\left<g, h\;|\;(h\utr g)\otr(h\otr g)=(h\utr g)\utr(h\otr g),
(h\otr g)\otr(h\utr g)=(h\otr g)\utr(h\utr g),\right.$\\$\left.
(g\utr h)\otr(g\otr h)=(g\utr h)\utr(g\otr h),
(g\otr h)\otr(g\utr h)=(g\otr h)\utr(g\utr h),\right.$\\$\left.
((g\utr h)\utr(g\otr h))\otr((h\otr g)\utr(h\utr g))=((g\otr h)\otr(g\utr h))\utr((h\utr g)\utr(h\otr g)),\right.$\\$\left.
((h\otr g)\otr(h\utr g))\utr((g\utr h)\otr(g\otr h))=((h\utr g)\utr(h\otr g))\otr((g\otr h)\otr(g\utr h))\right>.$
\ \\

Define $\otr^{-1}$, and $\utr^{-1}$ as the left-inverses of $\otr$ and $\utr$ respectively; and $S^{-1}$ as the inverse of $S(x,y)=(y\otr x, x\utr y).$ Calculations of the state-sum $\Phi$ for a one general biquandle coloring $\mathcal C$ generated by elements $g, h$, where each factor is obtained from each diagram in Figures \ref{fr2_Lp}--\ref{fr2_Lm}, give the following.

%%%%%%%%%%%%%%%%%%%%%%%%%%%

Put
\[
\alpha
:=
\bigl((g\utr h)\otr(g\otr h)\bigr)
\otr^{-1}(h\utr g).
\]
Then, for a biquandle coloring \(\mathcal C\) generated by \(g\) and \(h\),
the corresponding summand is
\[
\begin{aligned}
	\Phi^{BQ}_{\theta\mid\mathcal C}(FR)
	:={}&
	\theta\bigl(h,g,(g\utr h)\utr^{-1}h\bigr)
	\\
	&\cdot
	\theta\bigl(
	h\utr g,
	h\otr g,
	\alpha
	\bigr)^{-1}
	\\
	&\cdot
	\theta\bigl(
	g,h,(g\utr h)\otr^{-1}h
	\bigr)^{-1}
	\\
	&\cdot
	\theta(x,y,z)
	\\
	&\cdot
	\theta\bigl(
	((g\utr h)\otr(h\utr g))
	\utr^{-1}(g\otr h),
	g\utr h,
	g\otr h
	\bigr)^{-1}
	\\
	&\cdot
	\theta\bigl(
	(h\otr g)\utr^{-1}g,
	h,
	g
	\bigr)
	\\
	&\cdot
	\theta(t,w,t)
	\\
	&\cdot
	\theta\bigl(
	(h\otr g)\utr^{-1}p,
	p,
	v
	\bigr)^{-1}.
\end{aligned}
\]

Here \(x,y,z,w,t,p,v\) are determined by
\[
\begin{aligned}
	(x,y)
	&=S^{-1}(m_{20},m_{17}),\\
	(x,z)
	&=S^{-1}\bigl(
	m_{19},
	(h\utr g)\otr(h\otr g)
	\bigr),\\
	(y,z)
	&=S^{-1}(m_{14},m_{13}),\\
	(w,t)
	&=S^{-1}\bigl(
	m_{21},
	(h\utr g)\otr(h\otr g)
	\bigr),\\
	(p,v)
	&=S^{-1}(h\otr g,g\utr h),
\end{aligned}
\]
where
\[
\begin{aligned}
	m_{20}
	={}&
	\Bigl(
	\bigl((h\otr g)\otr(h\utr g)\bigr)
	\utr
	\bigl((g\utr h)\utr(g\otr h)\bigr)
	\Bigr)
	\utr^{-1}
	\bigl((h\utr g)\otr(h\otr g)\bigr),
	\\[1ex]
	m_{17}
	={}&
	\Bigl(
	\bigl((h\utr g)\otr(g\utr h)\bigr)
	\utr
	\bigl(\alpha\otr(h\otr g)\bigr)
	\Bigr)
	\utr^{-1}
	\bigl((h\otr g)\utr\alpha\bigr),
	\\[1ex]
	m_{19}
	={}&
	(h\utr g)\otr(h\otr g),
	\\[1ex]
	m_{13}
	={}&
	(h\otr g)\otr\alpha,
	\\[1ex]
	m_{14}
	={}&
	(h\utr g)\utr\alpha,
	\\[1ex]
	m_{21}
	={}&
	(g\otr h)
	\otr
	\Bigl(
	\bigl((g\utr h)\otr(h\utr g)\bigr)
	\otr^{-1}(g\otr h)
	\Bigr).
\end{aligned}
\]

	Consider the biquandle $B=\{0,1,2\}$ with operations $\utr$ and $\otr$ given by the following tables.
	
	\medskip
	
	\noindent
	The $\utr$ operation is
	\[
	\renewcommand{\arraystretch}{1.2}
	\begin{array}{c|ccc}
		\utr & 0 & 1 & 2\\ \hline
		0 & 0 & 1 & 2\\
		1 & 1 & 2 & 0\\
		2 & 2 & 0 & 1
	\end{array}
	\]
	and the $\otr$ operation is
	\[
	\renewcommand{\arraystretch}{1.2}
	\begin{array}{c|ccc}
		\otr & 0 & 1 & 2\\ \hline
		0 & 0 & 0 & 0\\
		1 & 2 & 2 & 2\\
		2 & 1 & 1 & 1
	\end{array}
	\]
	
	\medskip
	
	Let $A=\mathbb Z_3=\langle u \mid u^3=1\rangle$.
	Define a function $\theta_S:B^3\to A$ by
	\[
	\theta_S=
	\chi_{(0,1,2)}\,
	\chi_{(2,0,2)}\,
	\chi_{(1,2,1)}^{-1}\,
	\chi_{(2,0,1)}^{-1},
	\]
	where $\chi_{(a,b,c)}(x,y,z)=u$ if $(x,y,z)=(a,b,c)$ and $1$ otherwise. We verified by computation that $\theta_S$ satisfies conditions (i)--(iii), and hence is a singular $3$-cocycle.
	
	\medskip
	
	With this choice of biquandle and cocycle, the state-sum invariant for the
	Fenn--Rolfsen example evaluates to
	\[
	\Phi^B_{\theta_S}(FR)=3.
	\]
	
\subsection{A non-trivial integer-valued example}

We now give an example in which the state-sum has an integer value, but
nevertheless distinguishes the Fenn--Rolfsen surface-link from a trivial
reference surface-link under the assumption that the coloring number of
the latter is a power of the order of the target biquandle.

Let
$
X=\mathbb{Z}_{6}
$
and define two operations on \(X\) by
\[
x \utr y
= x+4y \pmod 6,
\qquad
x \otr y
= 5x \pmod 6.
\]
This is the Alexander biquandle corresponding to the parameters
$
s=5,
t=1,
$ since\\
$
x \utr y
=tx+(s-t)y
=x+4y \pmod 6,
\qquad
x \otr y
=sx
=5x \pmod 6.
$
Both $s$ and $t$ are units in $\mathbb{Z}_{6}$, and hence these
operations define an Alexander biquandle.

The operation tables are
$
\begin{array}{c|cccccc}
	\utr & 0&1&2&3&4&5\\
	\hline
	0&0&4&2&0&4&2\\
	1&1&5&3&1&5&3\\
	2&2&0&4&2&0&4\\
	3&3&1&5&3&1&5\\
	4&4&2&0&4&2&0\\
	5&5&3&1&5&3&1
\end{array}
\quad
\begin{array}{c|cccccc}
	\otr & 0&1&2&3&4&5\\
	\hline
	0&0&0&0&0&0&0\\
	1&5&5&5&5&5&5\\
	2&4&4&4&4&4&4\\
	3&3&3&3&3&3&3\\
	4&2&2&2&2&2&2\\
	5&1&1&1&1&1&1
\end{array}.
$
\\

This biquandle is not a quandle biquandle. Indeed, its second operation
is not trivial; for example,
$
1 \otr 0=5\neq 1.
$

Substituting these operations into the defining relations of the
fundamental biquandle of the Fenn--Rolfsen surface-link, the coloring
conditions for the two generators $g,h\in\mathbb{Z}_{6}$ reduce to
$
4(g-h)=0\pmod 6.
$
Equivalently,
$
g-h\equiv 0\pmod 3.
$
Consequently, the complete set of colorings is\\
$
{Col}^{B}_{X}(FR)
=\{
(0,0),(0,3),
(1,1),(1,4),
(2,2),(2,5),
(3,0),(3,3),
(4,1),(4,4),
(5,2),(5,5)
\}.
$\\
It follows that
$
\#{Col}^{B}_{X}(FR)=12.
$

Let $A=\mathbb Z_3=\langle u \mid u^3=1\rangle$.
Define a function $\theta_S:X^3\to A$ by

\[
\begin{aligned}
	\theta_S={}&
	\chi_{(0,2,1)}
	\chi_{(0,4,5)}
	\chi_{(2,3,1)}
	\chi_{(2,3,2)}
	\chi_{(2,4,2)}
	\chi_{(4,2,4)}
	\chi_{(4,3,4)}
	\chi_{(4,3,5)}
	\\
	&\cdot
	\chi_{(0,1,2)}^{-1}
	\chi_{(0,5,4)}^{-1}
	\chi_{(2,0,4)}^{-1}
	\chi_{(2,0,5)}^{-1}
	\chi_{(2,1,5)}^{-1}
	\chi_{(4,0,1)}^{-1}
	\chi_{(4,0,2)}^{-1}
	\chi_{(4,5,1)}^{-1},
\end{aligned}
\]where $\chi_{(a,b,c)}{(x,y,z)}$ is defined to be $u$ if ${(x,y,z)}={(a,b,c)}$ and $1$ otherwise. 

We verified by computation that $\theta_S$ satisfies conditions (i)--(iii), and hence is a singular $3$-cocycle.

The corresponding state-sum is
therefore the integer
$
\Phi^{X}_{\theta_S}(FR)
=
12.
$ Suppose that for the chosen trivial reference surface-link \(U\), the splitted unlink of two (singular) components, its
state-sum with respect to the same biquandle is of the form
$\Phi^{X}_{\theta_S}(U)=|X|^{2}=6^{2}.$ Since $12\neq 6^{2}$, we obtain
$\Phi^{X}_{\theta_S}(FR)\neq \Phi^{X}_{\theta_S}(U).$
Thus, under this assumption, the integer-valued state-sum distinguishes
the Fenn--Rolfsen surface-link from the trivial reference surface-link.

We emphasize that this example is non-trivial as a biquandle coloring
invariant, although it is not a non-trivial cocycle enhancement: the
state-sum contains only the identity element of the coefficient group.

The computations were verified using Pyhon.

\end{document}